\theoremstyle{plain}
\newtheorem{thm}{Theorem}[section]
\newtheorem{lem}[thm]{Lemma}
\newtheorem{cor}[thm]{Corollary}
\newcommand{\Z}{\mathbb{Z}}
\newcommand{\N}{\mathbb{N}}
\newcommand{\FF}{\mathbb{F}}
\DeclareMathOperator{\Hom}{Hom}
\DeclareMathOperator{\End}{End}
\DeclareMathOperator{\Ext}{Ext}
\DeclareMathOperator{\Ker}{Ker}
\DeclareMathOperator{\Ima}{Im}
\DeclareMathOperator{\Def}{Def}
\DeclareMathOperator{\Ind}{Ind}
\DeclareMathOperator{\Coker}{Coker}
\DeclareMathOperator{\Rep}{Rep}
\begin{document}
\title[]{Representations of equipped graphs: Auslander-Reiten theory}

\author{William Crawley-Boevey}
\address{Fakult\"at f\"ur Mathematik, Universit\"at Bielefeld, Postfach 100131, 33501 Bielefeld, Germany}
\email{wcrawley@math.uni-bielefeld.de}

\subjclass[2010]{16G20 (primary)}
%\thanks{Mathematics Subject Classification (2010): Primary 16G20}
% 16G20 Representations of quivers and partially ordered sets
\thanks{The author is supported by the Alexander von Humboldt Foundation in the framework of an 
Alexander von Humboldt Professorship endowed by the German Federal Ministry of Education and Research.}
\thanks{This paper is in final form and no version of it will be submitted for publication elsewhere}

\begin{abstract}
Representations of equipped graphs were introduced by Gelfand and Ponomarev; 
they are similar to representation of quivers, but one does not need to choose an orientation of the graph.
In a previous article we have shown that, as in Kac's Theorem for quivers,
the dimension vectors of indecomposable representations are exactly the positive
roots for the graph. In this article we begin by surveying that work, and then we go on to discuss 
Auslander-Reiten theory for equipped graphs, and give examples of Auslander-Reiten quivers.
\end{abstract}
\maketitle
\thispagestyle{empty}
\pagestyle{empty}

\section{Equipped graphs}
Recall that a \emph{quiver} is a tuple $Q = (Q_0,Q_1,h,t)$ where $Q_0$ is a set of \emph{vertices}, $Q_1$ is a set of \emph{arrows}, 
and $h,t:Q_1\to Q_0$ assign a \emph{head} $h(a)$ and \emph{tail} $t(a)$ for each arrow $a$, so $a:t(a)\to h(a)$.
By definition a \emph{graph} is a pair $(G,*)$ where $G$ is a quiver and $*$ is a fixed-point-free involution on $G_1$, 
exchanging heads and tails, that is $(a^*)^* = a$, $a^* \neq a$ and $h(a^*) = t(a)$ for $a\in G_1$.
(This definition appears for example in \cite[\S2.1]{S}. It allows a graph to have multiple distinguishable edges between two vertices, 
and also to have edge-loops.)

One can pass between quivers and graphs as follows.
The \emph{underlying graph} of a quiver $Q$ is the the graph $(\overline{Q},*)$ where $\overline{Q}$ is the \emph{double} of $Q$,
obtained from $Q$ by adjoining a new arrow $a^*:j\to i$ for each arrow $a:i\to j$ in $Q$, and $*$ is the involution exchanging $a$ and $a^*$.
Conversely to pass from a graph $G$ to a quiver $Q$ one chooses an \emph{orientation} of $G$, which is a subset $\Omega$ of $G_1$
which meets each $G$-orbit in exactly one point. The quiver is then $(G_0,\Omega,h|_\Omega,t|_\Omega)$.

With this setup, the definition of an \emph{equipped graph} $E = ((G,*),\Omega)$ 
becomes very natural: it consists of a graph $(G,*)$ and an arbitrary subset $\Omega$ of $G_1$.

The set $\Omega$ has a characteristic function $\phi:G_1\to \{0,1\}$, taking the value 1 on elements in $\Omega$,
and 0 on elements not in $\Omega$, and instead of fixing $\Omega$, it is equivalent to fix a function $\phi$.

We picture an equipped graph $((G,*),\Omega)$ as follows. We draw a vertex for each element of $G_0$, and
for each orbit of $*$ on $G_1$, say $\{a,a^*\}$ with $a:i\to j$ and so $a^*:j\to i$, we draw a decorated edge as follows
\[
\begin{array}{c|cc}
& a\in\Omega  &  a\notin\Omega
\\
\hline \\[-8pt]
a^*\in\Omega &
\stackrel{i}{\bullet} -\!\!\!-\!\!\!- \stackrel{j}{\bullet}
&
\stackrel{i}{\bullet} \ \longleftarrow \ \stackrel{j}{\bullet}
\\[3pt]
a^*\notin\Omega &
\stackrel{i}{\bullet} \ \longrightarrow \ \stackrel{j}{\bullet}
&
\stackrel{i}{\bullet} \ \leftarrow\!\rightarrow \ \stackrel{j}{\bullet}
\end{array}
\]
Thus $\Omega$ corresponds to the set of tails in the picture. (Thinking just about tails, one could instead define an equipped graph
as a tuple $(G_0,G_1,t,*,\Omega)$ where $G_0$ and $G_1$ are sets, $t:G_1\to G_0$, $*$ is a fixed-point-free involution on $G_1$ and $\Omega$
is a subset of $G_1$; one recovers $h$ as $t\circ *$.)

\section{Relations}
Let $K$ be a field.
Given vector spaces $V$ and $W$, a \emph{(linear) relation} from $V$ to $W$ is a subspace $R\subseteq V\oplus W$.
The graph of a linear map $f:V\to W$ is a relation $\{ (v,f(v)) : v\in V\}$, and in general we think of a relation
as a generalization of a linear map.
In this spirit, the composition of relations $R\subseteq V\oplus W$ and $S\subseteq U\oplus V$ is 
$RS = \{ (u,w) \in U\oplus W : \text{$(u,v)\in S$ and $(v,w)\in R$ for some $v\in V$}\}$.
Any relation $R$ gives a subspace $\{ (w,v) \colon (v,w)\in R\}$ of $W\oplus V$; we call it the 
\emph{opposite} $R^{op}$ or \emph{inverse} $R^{-1}$ of $R$.

Gelfand and Ponomarev \cite{GP} observed that a relation $R\subseteq V\oplus W$ induces 
subspaces of $\Ker R \subseteq \Def R \subseteq V$ and $\Ind R \subseteq \Ima R \subseteq W$,
\[
\begin{array}{ll}
\Ker R = \{ v\in V \colon (v,0)\in R \} & \text{Kernel} \\
\Def R = \{ v\in V \colon \text{$(v,w)\in R$, some $w\in W$} \} & \text{Domain of definition} \\
\Ind R = \{ w\in W : (0,w)\in R \} & \text{Indeterminacy} \\
\Ima R = \{ w\in W : \text{$(v,w)\in R$, some $v\in V$} \}  & \text{Image.}
\end{array}
\]
Moreover they observed that $R$ induces an isomorphism between suitable quotient spaces.
We call this the \emph{Isomorphism Theorem}. The proof is straightforward.

\begin{thm}
Any relation $R\subseteq V\oplus W$ induces a linear map from $\Def R$ to $W/\Ind R$ and an isomorphism $\Def R/\Ker R\cong \Ima R/\Ind R$.
Any isomorphism between a subquotient of\/ $V$ and a subquotient of\/ $W$ arises in this way 
from some relation $R$, and together with the subspaces, it uniquely determines $R$.
\end{thm}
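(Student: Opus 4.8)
The plan is to verify the three assertions in turn: existence of the induced map and isomorphism, the fact that every isomorphism of subquotients arises from a relation, and the uniqueness of that relation.

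\textbf{Existence of the map and the isomorphism.} First I would define the map $\Def R\to W/\Ind R$ by sending $v\in\Def R$ to the coset $w+\Ind R$, where $w\in W$ is chosen so that $(v,w)\in R$. I would check this is well defined: if $(v,w)\in R$ and $(v,w')\in R$, then $(0,w-w')=(v,w)-(v,w')\in R$ since $R$ is a subspace, so $w-w'\in\Ind R$. Linearity is then immediate from $R$ being a subspace. The kernel of this map consists of those $v\in\Def R$ for which there is $w\in\Ind R$ with $(v,w)\in R$; subtracting $(0,w)\in R$ gives $(v,0)\in R$, so the kernel is exactly $\Ker R$. The image is $\{w+\Ind R : (v,w)\in R \text{ some } v\}=\Ima R/\Ind R$ by definition of $\Ima R$. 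So the induced map $\Def R/\Ker R\to\Ima R/\Ind R$ is an isomorphism.

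\textbf{Every isomorphism arises this way.} Suppose we are given subspaces $K\subseteq D\subseteq V$ and $I\subseteq J\subseteq W$ together with an isomorphism $\theta:D/K\to J/I$. I would simply \emph{define} $R=\{(v,w)\in D\oplus J : \theta(v+K)=w+I\}$ and check that this is a subspace of $V\oplus W$ (clear, since $\theta$ is linear), and then that $\Ker R=K$, $\Def R=D$, $\Ind R=I$, $\Ima R=J$, and that the isomorphism $\Def R/\Ker R\to\Ima R/\Ind R$ induced by $R$ is precisely $\theta$. Each of these is a short unwinding of definitions: for instance $(v,0)\in R$ iff $v\in D$ and $\theta(v+K)=0+I$ iff $v\in K$ since $\theta$ is injective, giving $\Ker R=K$; and $v\in\Def R$ iff there is $w\in J$ with $w+I=\theta(v+K)$, which is possible for any $v\in D$, giving $\Def R=D$; surjectivity of $\theta$ gives $\Ima R=J$, and so on.

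\textbf{Uniqueness.} Here I would argue that the four subspaces $\Ker R,\Def R,\Ind R,\Ima R$ together with the induced isomorphism $\bar\theta:\Def R/\Ker R\to\Ima R/\Ind R$ determine $R$ completely. Given $(v,w)\in V\oplus W$, I claim $(v,w)\in R$ iff $v\in\Def R$, $w\in\Ima R$, and $\bar\theta(v+\Ker R)=w+\Ind R$. The forward direction is immediate from the definitions. For the converse, given such $(v,w)$, the relation $R$ contains some $(v,w')$ with $w'+\Ind R=\bar\theta(v+\Ker R)=w+\Ind R$, hence $w-w'\in\Ind R$, so $(0,w-w')\in R$ and therefore $(v,w)=(v,w')+(0,w-w')\in R$. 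This pins down $R$ from the data, which is exactly the uniqueness claim, and it also shows the relation constructed in the previous paragraph is the unique one realizing $(K,D,I,J,\theta)$.

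I do not expect any genuine obstacle here — the theorem is, as the authors say, straightforward, and the only mild care needed is in keeping track of which inclusions ($\Ker R\subseteq\Def R$, $\Ind R\subseteq\Ima R$) and well-definedness checks are in play; the uniqueness step is the one most easily stated too loosely, so I would be careful to phrase it as: the relation is recovered by the membership criterion $(v,w)\in R\iff v\in\Def R,\ w\in\Ima R,\ \bar\theta(v+\Ker R)=w+\Ind R$.
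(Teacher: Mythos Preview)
Your argument is correct in all three parts; the well-definedness, kernel, and image computations are exactly what is needed, and your membership criterion $(v,w)\in R\iff v\in\Def R,\ w\in\Ima R,\ \bar\theta(v+\Ker R)=w+\Ind R$ cleanly handles both existence and uniqueness. The paper itself does not supply a proof --- it simply states that the proof is straightforward --- so there is nothing further to compare; your write-up is precisely the kind of verification the authors had in mind.
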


Linear relations were studied by Maclane \cite{M}, evidently with the intention that they should play a wider role in homological algebra.
As an illustrative example, given a commutative diagram with exact rows
\[
\begin{CD}
0 @>>> X @>f>> Y @>g>> Z @>>> 0 \\
& & @V\theta VV @V\phi VV @V\psi VV \\
0 @>>> X' @>f'>> Y' @>g'>> Z' @>>> 0
\end{CD}
\]
\\
the connecting map $\Ker \psi\to \Coker\theta$ in the Snake Lemma is induced by the relation $(f')^{-1}\phi g^{-1}$.

Following Gelfand and Ponomarev, a relation $R\subseteq V\oplus W$ is said to be 
an \emph{equipped relation of type $(r,s)$} with $r,s\in\{0,1\}$,
provided that: if $r=1$ then $\Def R=V$, if $r=0$ then $\Ker R=0$, if $s=1$ then $\Ima R = W$, if $s=0$ then $\Ind R = 0$.

We indicate that $R\subseteq V\oplus W$ is an equipped relation of type $(r,s)$ by drawing an edge as follows
\[
\begin{array}{c|cc}
& r=1  &  r=0
\\
\hline \\[-8pt]
s=1 &
V -\!\!\!-\!\!\!- W
&
V \longleftarrow W
\\[3pt]
s=0 &
V \longrightarrow W
&
V \leftarrow\!\rightarrow W
\end{array}
\]
An equipped relation of type $(1,0)$ is (the graph of) a linear map, and one of type $(0,1)$ is the inverse of a linear map.
An equipped relation of type $(1,1)$ is determined by the subspaces $\Ker R\subseteq V$
and $\Ind R\subseteq W$ and an isomorphism $V/\Ker R \cong W/\Ind R$, while
an equipped relation of type $(0,0)$ is determined by the subspaces $\Def R\subseteq V$
and $\Ima R\subseteq W$ and an isomorphism $\Def R \cong \Ima R$.

\section{Representation of equipped graphs}
The category $\Rep E$ of representations of a (finite) equipped graph $E= ((G,*),\Omega)$ is defined, following \cite{GP}, as follows.

An object $X$ consists of a (finite-dimensional) vector space $X_i$ for each vertex $i\in G_0$ and a relation $X_a \subseteq X_{t(a)}\oplus X_{h(a)}$
for each arrow $a\in G_1$, which is equipped of type $(\phi(a),\phi(a^*))$ (where $\phi$ is the characteristic function of $\Omega$),
and with $X_{a^*} = (X_a)^{-1}$.

A morphism $\theta:X\to Y$ consists of a linear map $\theta_i:X_i\to Y_i$ for each $i\in G_0$ such that
$(\theta_{t(a)}(v),\theta_{h(a)}(w)) \in Y_a$ for all $a\in G_1$ and $(v,w)\in X_a$.

It is clear that $\Rep E$ is an additive $K$-category: the direct sum $X\oplus Y$ of two representations is given by 
$(X\oplus Y)_i = X_i\oplus Y_i$ for $i\in G_0$ and $(X\oplus Y)_a = X_a\oplus Y_a$ for $a\in G_1$.
The \emph{dimension vector} of a 
representation $X$ is the vector $\underline\dim X \in \N^{G_0}$ whose $i$th component is $\dim X_i$.

In \cite{CB} we proved the following version of Kac's Theorem \cite{Kac1,Kac2}. The root
system is the same as for Kac. In case $G$ has no loops, it is the root system for a Kac-Moody Lie algebra.

\begin{thm}
\label{t:eqkac}
Over an algebraically closed field $K$, the dimension vectors of indecomposable representations of $E$ are exactly the
positive roots for the underlying graph $G$. Up to isomorphism there is a unique indecomposable for each positive real root, 
infinitely many for each positive imaginary root.
\end{thm}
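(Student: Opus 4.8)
The plan is to reduce the statement to Kac's Theorem for quivers by relating representations of the equipped graph $E$ to representations of a suitable quiver. The key observation is that an equipped relation of type $(r,s)$ can be "resolved" into genuine linear maps by introducing the subquotient that appears in the Isomorphism Theorem. Concretely, given an equipped relation $R\subseteq V\oplus W$, the Isomorphism Theorem produces a space $R/(\Ker R\oplus\Ind R)$ together with maps (or inverses of maps) relating it to $V$ and to $W$; depending on the type $(r,s)$ one of $\Ker R$, $\Def R$ is distinguished and similarly for $\Ind R$, $\Ima R$. So I would associate to $E$ a quiver $Q^E$ in which each decorated edge of $E$ is replaced by a small "gadget" — one or two auxiliary vertices and arrows encoding the intermediate subquotient and the canonical maps/inverses — and then show that $\Rep E$ is equivalent to (or at least has the same indecomposables as) a full subcategory of $\Rep Q^E$ cut out by requiring certain structure maps to be injective or surjective. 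This last condition is exactly the kind of "monomorphism/epimorphism" constraint handled by reflection functors or by passing to the subcategory of representations with no subrepresentation or quotient supported at the auxiliary vertices.

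The steps, in order, would be: (1) For each type $(r,s)$, write down explicitly the gadget and check, using the Isomorphism Theorem, that specifying an equipped relation of that type on $(V,W)$ is the same as specifying a $Q^E$-representation on the gadget with the prescribed injectivity/surjectivity at auxiliary vertices; here the four cases of the decorated-edge table correspond to the four cases $r,s\in\{0,1\}$, and the cases $(1,0)$ and $(0,1)$ are trivial since the relation is already (the inverse of) a map. (2) Assemble these local equivalences into a functor $\Rep E\to\Rep Q^E$ landing in the subcategory $\mathcal{C}$ of representations satisfying all the injectivity/surjectivity conditions, and check it is an equivalence onto $\mathcal{C}$. (3) Observe that the dimension vector of $X\in\Rep E$ is the restriction to $G_0$ of the dimension vector of its image in $\Rep Q^E$, and that the auxiliary components of the latter are determined (as dimensions of the canonical subquotients) by the $G_0$-components together with which type each edge has. (4) Use Kac's Theorem for $Q^E$ together with an analysis of how indecomposables of $\Rep Q^E$ restrict to $\mathcal{C}$: an indecomposable of $\Rep Q^E$ lies in $\mathcal{C}$ up to the relevant reflection, and the reflections at auxiliary vertices identify the roots of $Q^E$ relevant to $\mathcal{C}$ with the roots of $G$. (5) Track the real-versus-imaginary dichotomy and the finiteness statements through this correspondence, using that reflection functors preserve indecomposability and that imaginary roots have positive-dimensional families.

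The main obstacle I expect is step (4): controlling precisely which indecomposable $Q^E$-representations correspond to $E$-representations, i.e. understanding the subcategory $\mathcal{C}$ well enough to match its indecomposables with the positive roots of $G$ rather than those of $Q^E$. The root systems of $G$ and $Q^E$ live on different vertex sets, and one must show that the Weyl group action and the reflection functors intertwine correctly so that the auxiliary vertices can be "eliminated" at the level of roots — essentially that $\mathcal{C}$ is closed under the reflections at $G_0$-vertices and that every indecomposable can be moved into $\mathcal{C}$ by reflections at auxiliary vertices, with the dimension vector transforming by the corresponding projection. A secondary subtlety is the behaviour at imaginary roots and at loops, where Kac's Theorem gives infinitely many indecomposables and one must check that this multiplicity survives the passage to $\mathcal{C}$; this should follow from a dimension count on the relevant representation varieties, but it requires care because imposing the open conditions defining $\mathcal{C}$ could in principle collapse a family. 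Once these points are settled, Theorem~\ref{t:eqkac} follows directly from Kac's Theorem applied to $Q^E$.
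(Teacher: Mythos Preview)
Your steps (1)--(3) recover exactly the paper's Lemma~\ref{l:assocq}: the equivalence $\Rep E \simeq \Rep^{I,S}Q$ for an associated quiver $Q$ with prescribed injective/surjective arrows, together with the observation that the dimension vector on $G_0$ is the restriction of the dimension vector on $Q_0$. So far this matches the paper.

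The gap is precisely where you locate it, in step~(4), and the reflection-functor route does not close it. The underlying graph of $Q$ is obtained from $G$ by subdividing edges, so its root system is genuinely larger than that of $G$; there is no natural projection of roots, and the auxiliary component of $\beta$ is \emph{not} determined by $\alpha=\beta|_{G_0}$ (it records the rank of the relation, which varies). Reflecting at an auxiliary sink or source $k$ reverses the orientation at $k$ and replaces $\dim X_k$ by $(\sum_{\text{nbrs}}\dim)-\dim X_k$; this neither produces the injectivity/surjectivity condition cutting out $\mathcal C$, nor sets up a bijection between indecomposables in $\mathcal C$ and positive roots of $G$. The simple at an auxiliary vertex already shows that ``every indecomposable of $Q$ lies in $\mathcal C$ up to reflection'' is false. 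For imaginary roots the situation is worse still: reflection functors do not classify indecomposables, so the dimension count you gesture at in step~(5) has nothing to anchor to.

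The paper bypasses this by working with counting polynomials over finite fields rather than with individual representations. One defines $A^M_{Q,\beta}(q)$ to be the number of absolutely indecomposable $\FF_q$-representations of $Q$ of dimension $\beta$ in which every arrow in $M$ has maximal rank, and shows by inclusion--exclusion on the rank of one arrow at a time (replacing $a:i\to j$ by $i\twoheadrightarrow k\hookrightarrow j$ with $\dim_k=r<\min(\beta_i,\beta_j)$) that $A^M_{Q,\beta}$ inherits all of Kac's properties --- in particular that it depends only on the underlying graph of $Q$. Then $A_{E,\alpha}(q)=\sum_\beta A^{I\cup S}_{Q,\beta}(q)$, the sum over the finitely many $\beta$ extending $\alpha$ compatibly with the constraints. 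Independence of orientation now does the work your reflections were meant to do: since $A_{E,\alpha}$ depends only on $G$, it equals $A_{Q',\alpha}$ for any ordinary quiver $Q'$ with underlying graph $G$, and Kac's original lemma applied to $Q'$ gives the root-theoretic characterisation and the real/imaginary dichotomy immediately. The passage from $\FF_q$ to an algebraically closed $K$ is then the standard argument already in Kac. The moral: the elimination of auxiliary vertices happens at the level of the polynomials $A^M$, not at the level of modules.
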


In particular, for an algebraically closed field this recovers the result of Gelfand and Ponomarev \cite{GP}, that an 
equipped graph has only finitely many indecomposable representations if and only if the graph is Dynkin (if connected),
and in this case the indecomposable representations correspond to positive roots.

\section{(Elementary) deduction of Kac's Theorem}
Our version of Kac's Theorem is deduced from the original version for quivers. 
To begin, we observe that the category of representation of an equipped graph can be
embedded in the category of representations of a suitable quiver.

Let $Q$ be a finite quiver and let $I,S\subseteq Q_1$. We write $\Rep^{I,S} Q$ for the category of
finite-dimensional representations of $Q$ in which the arrows in $I$ are injective and the arrows in $S$ are surjective.
This is a full subcategory of $\Rep Q$, which is clearly closed under extensions and direct summands.

\begin{lem}
\label{l:assocq}
Given an equipped graph $E$, there is an associated quiver $Q$
and subsets $I,S \subseteq Q_1$, such $\Rep E$ is equivalent to $\Rep^{I,S} Q$.
\end{lem}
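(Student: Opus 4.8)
The plan is to build the quiver $Q$ together with $I$ and $S$ explicitly, one ``gadget'' per $*$-orbit of $G_1$, and then to write down mutually quasi-inverse functors $F\colon\Rep E\to\Rep^{I,S}Q$ and $G\colon\Rep^{I,S}Q\to\Rep E$. Write $E=((G,*),\Omega)$ with characteristic function $\phi$. For a $*$-orbit $\alpha=\{a,a^*\}$ with $a\colon i\to j$ there are four cases, according to the type $(\phi(a),\phi(a^*))\in\{0,1\}^2$ of the relation $X_a$, matching the four pictures of an edge. I would let the vertex set of $Q$ be $G_0$, together with one new vertex $c_\alpha$ for each orbit $\alpha$ of type $(1,1)$ and one new vertex $d_\alpha$ for each orbit of type $(0,0)$. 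The arrows attached to $\alpha$ are: a single arrow $i\to j$ if $\alpha$ has type $(1,0)$; a single arrow $j\to i$ if type $(0,1)$; two arrows $i\to c_\alpha$ and $j\to c_\alpha$ if type $(1,1)$; two arrows $d_\alpha\to i$ and $d_\alpha\to j$ if type $(0,0)$ (for an edge-loop these two become parallel arrows, which is harmless). Finally $S$ is the set of all arrows into some $c_\alpha$, and $I$ the set of all arrows out of some $d_\alpha$.

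Next I would define $F$. On a representation $X$ of $E$ put $(FX)_i=X_i$ for $i\in G_0$; for a type-$(1,1)$ orbit set $(FX)_{c_\alpha}=X_a/(\Ker X_a\oplus\Ind X_a)$ with the two evident maps from $X_i$ and $X_j$, which are well defined and surjective exactly because $\Def X_a=X_i$ and $\Ima X_a=X_j$; for a type-$(0,0)$ orbit set $(FX)_{d_\alpha}=X_a\subseteq X_i\oplus X_j$ with the two coordinate projections, which are injective exactly because $\Ind X_a=0$ and $\Ker X_a=0$; and on a type-$(1,0)$, resp.\ $(0,1)$, arrow use the linear map whose graph is $X_a$, resp.\ $X_{a^*}$. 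A morphism $\theta\colon X\to Y$ in $\Rep E$ restricts to the maps $\theta_i$ ($i\in G_0$), and the defining compatibility $(\theta_{t(a)}v,\theta_{h(a)}w)\in Y_a$ forces $\theta$ to carry $X_a$, $\Ker X_a$, $\Ind X_a$ into the corresponding subspaces for $Y$, hence induces maps at the new vertices; this is $F\theta$. For the quasi-inverse $G$, on $Y\in\Rep^{I,S}Q$ put $(GY)_i=Y_i$ for $i\in G_0$; on a type-$(1,0)$ (resp.\ $(0,1)$) orbit let $(GY)_a$ be the graph (resp.\ inverse graph) of the corresponding arrow; on a type-$(1,1)$ orbit let $(GY)_a$ be the pullback $\{(v,w)\in Y_i\oplus Y_j:\alpha(v)=\beta(w)\}$ of the two surjections $\alpha,\beta$ to $Y_{c_\alpha}$; on a type-$(0,0)$ orbit let $(GY)_a$ be the image of $(\gamma,\delta)\colon Y_{d_\alpha}\to Y_i\oplus Y_j$ of the two injections. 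One checks that each $(GY)_a$ is an equipped relation of the prescribed type --- this is precisely where surjectivity of the $S$-arrows and injectivity of the $I$-arrows is needed --- and that $G$ is a functor with $(G\psi)_i=\psi_i$ for $i\in G_0$.

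It then remains to verify $GF=\id_{\Rep E}$ and $FG\cong\id_{\Rep^{I,S}Q}$. On the vertices of $G_0$ and on the $(1,0)$- and $(0,1)$-arrows both statements are immediate. For $GF=\id$ at a type-$(1,1)$ orbit I must show the pullback of the two surjections $X_i,X_j\twoheadrightarrow X_a/(\Ker X_a\oplus\Ind X_a)$ is again $X_a$; given $(v,w)$ in that pullback, $X_a$ contains some $(v,w_0)$ and also $(0,w-w_0)$, whose sum is $(v,w)$. Dually, at a type-$(0,0)$ orbit the image of $X_a\hookrightarrow X_i\oplus X_j$ under the two projections is $X_a$ tautologically. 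For $FG\cong\id$ at a type-$(1,1)$ orbit, applying $F$ to the pullback $P=(GY)_a$ gives $P/(\Ker P\oplus\Ind P)$, and the canonical map $P\to Y_{c_\alpha}$, $(v,w)\mapsto\alpha(v)=\beta(w)$, is surjective (because $\alpha,\beta$ are) with kernel $\Ker P\oplus\Ind P$, hence an isomorphism; at a type-$(0,0)$ orbit $(\gamma,\delta)$ is injective, so $(GY)_a\cong Y_{d_\alpha}$. These isomorphisms are natural and compatible with the structure arrows of $Q$, so they assemble into $FG\cong\id$, and $F$ is an equivalence $\Rep E\simeq\Rep^{I,S}Q$.

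The only real content, and the step I expect to be the main obstacle, is the reciprocity between the two operations on a $(1,1)$-edge --- ``pass to the isomorphism $V/\Ker R\cong W/\Ind R$'' and ``take the pullback of two surjections'' --- together with its dual for a $(0,0)$-edge. Everything else is bookkeeping over the four edge types, but here one genuinely uses the surjectivity/injectivity hypotheses, which is exactly why the target must be $\Rep^{I,S}Q$ and not all of $\Rep Q$; the extra vertices $c_\alpha$ and $d_\alpha$ carry nothing but the ``middle term'' $R$ of the relation, rigidified so that the correspondence becomes an honest equivalence of categories.
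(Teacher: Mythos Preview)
Your construction of $Q$, $I$, $S$ and the functors $F,G$ is exactly the paper's (minimal-vertex) construction, and your verification that $F$ and $G$ are quasi-inverse is correct; the paper itself only sketches this, saying the proof is ``straightforward, using the Isomorphism Theorem'' and the surjection--injection factorization of a linear map, so you have simply written out those details. One tiny point: in your check of $GF=\id$ at a $(1,1)$-edge you assert that $X_a$ contains $(0,w-w_0)$ without saying why --- the missing line is that the equality of images in $X_a/(\Ker X_a\oplus\Ind X_a)$ forces $w-w_0\in\Ind X_a$, which gives $(0,w-w_0)\in X_a$.
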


For example, for the equipped graph
\[
\stackrel{1}{\bullet} -\!\!\!-\!\!\!- 
\stackrel{2}{\bullet} \ \leftarrow\!\rightarrow \ 
\stackrel{3}{\bullet} \ \longrightarrow \ 
\stackrel{4}{\bullet}
\]
one can take $Q$ to be the following quiver 
(where the arrows in $I$ are drawn with a hook, and the arrows in $S$ are drawn with a double head)
\[
\stackrel{1}{\bullet} \twoheadrightarrow \bullet \twoheadleftarrow
\stackrel{2}{\bullet} \hookleftarrow \bullet \hookrightarrow
\stackrel{3}{\bullet} \ \longrightarrow \ 
\stackrel{4}{\bullet}
\]
or the quiver
\[
\stackrel{1}{\bullet} \twoheadrightarrow \bullet \twoheadleftarrow
\stackrel{2}{\bullet} \hookleftarrow \bullet \hookrightarrow
\stackrel{3}{\bullet} \twoheadrightarrow \bullet \hookrightarrow
\stackrel{4}{\bullet}.
\]
In general one needs to replace each double tailed edge 
$\bullet -\!\!\!-\!\!\!- \bullet$ by 
$\bullet \twoheadrightarrow \bullet \twoheadleftarrow \bullet$,
each double headed edge
$\bullet \leftarrow\!\rightarrow \bullet$
by $\bullet \hookleftarrow \bullet \hookrightarrow \bullet$,
and an ordinary arrow $\bullet \longrightarrow \bullet$ can be left as it is, or replaced
with $\bullet \twoheadrightarrow \bullet \hookrightarrow \bullet$.
The proof of the lemma is straightforward, using the Isomorphism Theorem, in the form stated here
and the usual version, that any linear map can be factorized as a surjection followed by an injection.

If we leave each ordinary arrow as it is, we obtain the associated quiver with the minimal number of vertices;
if we replace each ordinary arrow with a surjection followed by an injection, we obtain the associated quiver with
the maximal number of vertices. Both versions have their uses.

The proof of Kac's Theorem for quivers \cite{Kac1,Kac2} involves a study of the number
$A_{Q,\alpha}(q)$ of absolutely indecomposable representations of $Q$ of dimension $\alpha$ over 
a finite field $\FF_q$. Recall that a representation is \emph{absolutely indecomposable} provided it
is indecomposable, and remains so over the algebraic closure of the base field.

The following four properties are proved by Kac. Here 
\[
q(\alpha) = \sum_{i\in G_0} \alpha_i^2 - \frac12 \sum_{a\in G_1} \alpha_{t(a)}\alpha_{h(a)}
\]
is the quadratic form on $\N^{G_0}$ given by the underlying graph $G$ of $Q$.

\begin{lem}\ 
\label{l:kaclemma}
\begin{itemize}
\item[(i)]$A_{Q,\alpha}(q)\in \Z[q]$.
\item[(ii)]$A_{Q,\alpha}(q)\neq 0 \Leftrightarrow$ $\alpha$ is a positive root.
\item[(iii)]If so, then $A_{Q,\alpha}(q)$ is a monic polynomial of degree $1 - q(\alpha)$.
\item[(iv)]$A_{Q,\alpha}(q)$ depends on $Q$ only through its underlying graph.
\end{itemize}
\end{lem}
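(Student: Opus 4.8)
All four statements are theorems of Kac \cite{Kac1,Kac2}, so strictly the ``proof'' is a citation; but since the structure is what matters for deducing Theorem~\ref{t:eqkac}, let me describe the route I would take to reconstruct it.

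The plan for (i) is Kac's counting-over-finite-fields method: one assembles, from the point-counts of the representation varieties $\Rep(Q,\alpha)$ over the fields $\FF_{q^n}$ together with the Krull--Schmidt theorem --- which writes the series $\sum_\alpha M_{Q,\alpha}(q)\,t^\alpha$ of all isomorphism-class counts as an infinite product $\prod_{0\neq\beta}(1-t^\beta)^{-i_{Q,\beta}(q)}$ over indecomposables --- a generating-function identity whose formal logarithm encodes the numbers $i_{Q,\beta}$ of indecomposables; Galois descent, relating $\FF_q$-indecomposables to Frobenius-orbits of absolutely indecomposables over $\overline{\FF_q}$, then converts these into the $A_{Q,\beta}$, and tracking the arithmetic through a M\"obius-type inversion over the monoid $\N^{Q_0}$ shows each $A_{Q,\alpha}(q)$ lies in $\Z[q]$.

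For (ii) and (iii) I would bring in the Bernstein--Gelfand--Ponomarev reflection functors at sinks and sources. A reflection at a vertex $i$ induces a dimension-preserving bijection on indecomposables for every dimension vector except $e_i$, and carries $\alpha$ to $s_i\alpha$, so --- since $A_{Q,\alpha}$ is recovered from the counts over all $\FF_{q^n}$ and the single exceptional vector is handled directly --- one gets $A_{Q,\alpha}=A_{Q',s_i\alpha}$. Together with the description of the positive roots --- the real ones as the Weyl-group orbit of the simple roots, the positive imaginary ones as the orbit of the fundamental region --- this reduces (ii) to showing that a simple root carries a (one-dimensional) indecomposable and that a vector in the fundamental region carries at least one, the latter by a deformation argument on $\Rep(Q,\alpha)$. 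For (iii), reflections reduce the real case to $A_{Q,\alpha}=1$, matching $1-q(\alpha)=0$; in the imaginary case one identifies the top-degree term of $A_{Q,\alpha}(q)$ with the point-count on a dense open stratum of the variety of indecomposables, of dimension $\dim\Rep(Q,\alpha)-\dim GL_\alpha+1=1-q(\alpha)$, and that stratum being geometrically irreducible forces the leading coefficient to be $1$.

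The hard part, I expect, is (iv). Reflection functors already give orientation-independence whenever two orientations of $G$ are joined by a chain of sink/source reversals, but this can fail --- for instance when every vertex of $G$ lies on an oriented cycle, so there is no sink or source to reflect at --- so one needs a separate argument that reversing a single arrow changes no $A_{Q,\alpha}(q)$. Kac supplies this, and reconstructing it is the step I would budget the most effort for; granted it, everything above depends on $Q$ only through the underlying graph $G$ and the form $q$, as claimed.
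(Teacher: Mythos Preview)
Your proposal is correct and matches the paper's approach exactly: the paper does not prove this lemma at all but simply attributes it to Kac \cite{Kac1,Kac2}, and you rightly recognise that the ``proof'' here is a citation. Your supplementary outline of Kac's methods is a reasonable sketch of what lies behind the citation, and goes well beyond what the paper itself provides.
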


Given a quiver and a subset $M \subseteq Q_1$ we define $A^M_{Q,\alpha}(q)$ to be the
number of absolutely indecomposable representations of $Q$ of dimension $\alpha$ over 
$\FF_q$ in which the linear maps corresponding to arrows in $M$ have maximal rank.
Recall that this means that they are either injective or surjective - which applies is determined
by the dimension vector of the representation.
We obtain the following, which is \cite[Theorem 2.1]{CB}.

\begin{lem}\ 
\label{l:kacmax}
\begin{itemize}
\item[(i)]$A^M_{Q,\alpha}(q)\in \Z[q]$.
\item[(ii)]$A^M_{Q,\alpha}(q)\neq 0 \Leftrightarrow$ $\alpha$ is a positive root.
\item[(iii)]If so, then $A^M_{Q,\alpha}(q)$ is a monic polynomial of degree $1 - q(\alpha)$.
\item[(iv)]$A^M_{Q,\alpha}(q)$ depends on $Q$ only through its underlying graph.
\end{itemize}
\end{lem}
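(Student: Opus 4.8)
The plan is to prove all four statements simultaneously by induction, taking Lemma~\ref{l:kaclemma} (the case $M=\emptyset$) as the base case and reducing $M$ to a strictly smaller situation in the inductive step. I would induct on the finite multiset $\mathcal M(Q,M,\alpha)=\{\,\min(\alpha_{t(a)},\alpha_{h(a)}):a\in M,\ \min(\alpha_{t(a)},\alpha_{h(a)})\ge1\,\}$ of positive integers, ordered by the usual well-founded multiset order. If every $a\in M$ meets a vertex of dimension $0$ then each $X_a$ has maximal rank automatically, so $A^M_{Q,\alpha}=A_{Q,\alpha}$ and Lemma~\ref{l:kaclemma} applies; otherwise choose $a\in M$ with $m:=\min(\alpha_{t(a)},\alpha_{h(a)})\ge1$ and put $i=t(a)$, $j=h(a)$.

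For $0\le r\le m$ let $\widehat Q^{(r)}$ be obtained from $Q$ by replacing $a\colon i\to j$ by a path $i\xrightarrow{b}0\xrightarrow{c}j$ through a new vertex $0$, and let $\widehat\alpha^{(r)}$ agree with $\alpha$ and give $0$ the dimension $r$ (for $r=0$ this is just $Q$ with $a$ deleted). Replacing $X_a$ by its canonical factorization $X_i\twoheadrightarrow\Ima X_a\hookrightarrow X_j$ gives a bijection, on isomorphism classes, between the (absolutely) indecomposable representations of $Q$ of dimension $\alpha$ with $\operatorname{rank}X_a=r$ and maximal rank along $M\setminus\{a\}$, and the (absolutely) indecomposable representations of $\widehat Q^{(r)}$ of dimension $\widehat\alpha^{(r)}$ with $X_b$ surjective, $X_c$ injective and maximal rank along $M\setminus\{a\}$; since $r\le m$, surjectivity of $X_b$ and injectivity of $X_c$ are exactly the maximal-rank conditions at $b$ and $c$. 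Counting over $\FF_q$ and summing over the possible values $r$ of $\operatorname{rank}X_a$ gives, for every $q$,
\[
A^{M\setminus\{a\}}_{Q,\alpha}(q)=\sum_{r=0}^{m}A^{(M\setminus\{a\})\cup\{b,c\}}_{\widehat Q^{(r)},\,\widehat\alpha^{(r)}}(q),
\]
in which the $r=m$ summand is $A^M_{Q,\alpha}(q)$ and the $r=0$ summand is $A^{M\setminus\{a\}}_{Q\setminus\{a\},\alpha}(q)$.

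Now one closes the induction. Each summand on the right other than $A^M_{Q,\alpha}(q)$, and the term on the left, has multiset strictly below $\mathcal M(Q,M,\alpha)$, the entry $m$ coming from $a$ being removed and replaced by nothing ($r=0$, and the left side) or by two copies of some $r<m$ ($1\le r\le m-1$). So solving for $A^M_{Q,\alpha}(q)$ and applying the inductive hypothesis yields $A^M_{Q,\alpha}(q)\in\Z[q]$. For the degree one uses $q(\widehat\alpha^{(r)})=q(\alpha)+(r-\alpha_i)(r-\alpha_j)$, which exceeds $q(\alpha)$ for $0\le r\le m-1$ (both factors negative, or equal to $q(\alpha)+\alpha_i\alpha_j$ when $r=0$): every right-hand summand with $r<m$ then has degree $<1-q(\alpha)$, so $A^M_{Q,\alpha}(q)$ agrees with $A^{M\setminus\{a\}}_{Q,\alpha}(q)$ up to lower-order terms, which by induction is monic of degree $1-q(\alpha)$ precisely when $\alpha$ is a positive root. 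As each right-hand summand depends only on the underlying graph of its quiver, and those of $\widehat Q^{(r)}$ and $Q\setminus\{a\}$ depend only on that of $Q$, (iv) follows. Finally, if $\alpha$ is a positive root then $A^M_{Q,\alpha}(q)\ne0$ by (iii); if it is not a root then $A^{M\setminus\{a\}}_{Q,\alpha}(q)=0$ by induction, so the displayed sum of non-negative integers vanishes at every prime power $q$, forcing each summand, and in particular $A^M_{Q,\alpha}(q)$, to vanish identically.

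The step I expect to be the real obstacle is the bijection above: one must check that replacing an arrow by its surjection--injection factorization is compatible with direct sums (so that it matches indecomposable, and absolutely indecomposable, objects) and carries the maximal-rank conditions on the remaining arrows of $M$ across unchanged; and on the combinatorial side that the multiset order genuinely drops on each recursive call, for which it is essential that one may choose $a$ with $m\ge1$ after first discarding from $M$ any arrow at a vertex of dimension $0$. The other ingredients --- the behaviour of $q(\cdot)$ under deleting and subdividing an edge, and the non-negativity of the $A$-polynomials at prime powers used for (ii) --- are routine.
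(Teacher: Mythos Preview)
Your argument is correct and follows the same line as the paper's: both proofs rest on the identical recursion
\[
A^{M}_{Q,\alpha}(q) = A^{M\setminus\{a\}}_{Q,\alpha}(q) - \sum_{r<\min(\alpha_i,\alpha_j)} A^{(M\setminus\{a\})\cup\{b,c\}}_{\widehat Q^{(r)},\,\widehat\alpha^{(r)}}(q),
\]
obtained by factoring the map on $a$ through its image. The paper states this formula and then simply says ``one can then use induction''; you have supplied precisely what is needed to make that induction go through, namely the well-founded multiset order on $\{\min(\alpha_{t(a)},\alpha_{h(a)}):a\in M\}$, together with the quadratic-form calculation $q(\widehat\alpha^{(r)})-q(\alpha)=(r-\alpha_i)(r-\alpha_j)>0$ for $r<m$ that controls the degrees, and the non-negativity argument for (ii). Your identification of the $r=m$ summand with $A^{M}_{Q,\alpha}(q)$ itself (via the equivalence induced by collapsing the isomorphism $b$ or $c$) is the right way to close the loop, and your concern about the bijection is unfounded: the assignment $X\mapsto X_c\circ X_b$ is a fully faithful functor from representations of $\widehat Q^{(r)}$ with $b$ surjective and $c$ injective to representations of $Q$, with essential image exactly those $X$ with $\operatorname{rank}X_a=r$, so indecomposability and absolute indecomposability transfer automatically.
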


The idea of the proof is that if one wants to count representations in which an arrow $a:i\to j$ has maximal rank,
one can count all representations and subtract the number of representations in which it has rank
$r < \min(\alpha_i,\alpha_j)$. 
Now if the map representing $a$ has rank $R$, then it factorizes through a vector space of dimension $r$.
Thus if $a\notin M$ then
\[
A^{M\cup\{a\}}_{Q,\alpha}(q) = A^M_{Q,\alpha}(q) - \sum_{r<  \min(\alpha_i,\alpha_j)} A^{M\cup\{b,c\}}_{Q^r,\alpha^r}(q)
\]
where $Q^r$ is the quiver obtained from $Q$ by replacing $a$ by 
\[
\stackrel{i}\bullet \ \stackrel{b}{\twoheadrightarrow} \ \stackrel{k}{\bullet} \ \stackrel{c}{\hookrightarrow} \ \stackrel{j}{\bullet}
\]
and $\alpha^r$ is the dimension vector with $\alpha_k = r$ and otherwise equal to $\alpha$.
One can then use induction.

For an equipped graph $E$ we can equally well define $A_{E,\alpha}(q)$ to be the number of absolutely indecomposable
representations of $E$ of dimension vector $\alpha$, and we have the following.

\begin{lem}\ 
\label{l:eqlemma}
\begin{itemize}
\item[(i)]$A_{E,\alpha}(q)\in \Z[q]$.
\item[(ii)]$A_{E,\alpha}(q)\neq 0 \Leftrightarrow$ $\alpha$ is a positive root.
\item[(iii)]If so, then $A_{E,\alpha}(q)$ is a monic polynomial of degree $1 - q(\alpha)$.
\item[(iv)]$A_{E,\alpha}(q)$ depends on $E$ only through its underlying graph $G$.
\end{itemize}
\end{lem}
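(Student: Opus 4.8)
The plan is to bootstrap from Kac's theorem for quivers by means of the associated quiver of Lemma~\ref{l:assocq}. Fix $E$ with underlying graph $G$, let $Q$, $I$, $S$ be as in that lemma --- taking $Q$ to have the minimal number of vertices, so that the vertices not coming from $G_0$ arise only from the double-tailed and double-headed edges of $E$ --- and put $M=I\cup S$. First I would observe that the equivalence $\Rep E\simeq\Rep^{I,S}Q$ is built functorially from the Isomorphism Theorem, so it commutes with extension of scalars and hence restricts to a bijection between absolutely indecomposable objects. Under it a representation of $E$ of dimension vector $\alpha$ corresponds to a representation of $Q$ whose dimension vector $\beta$ restricts to $\alpha$ on $G_0$ and satisfies $\beta_k\le\min(\beta_u,\beta_v)$ at each extra vertex $k$, since such a $k$ carries the image (or the common domain of definition and image) of the equipped relation of the edge it replaces. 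For such $\beta$ the conditions ``arrows in $I$ injective, arrows in $S$ surjective'' are equivalent to ``arrows in $M$ of maximal rank'', so
\[
A_{E,\alpha}(q)=\sum_{\beta}A^{M}_{Q,\beta}(q),
\]
the (finite) sum being over the dimension vectors $\beta$ of $Q$ that restrict to $\alpha$ and satisfy $\beta_k\le\min(\beta_u,\beta_v)$ at every extra vertex. As each term lies in $\Z[q]$ by Lemma~\ref{l:kacmax}(i), this already yields part~(i).

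Next I would contract the extra vertices. By Lemma~\ref{l:kacmax}(iv) the polynomials $A^{M}_{Q,\beta}(q)$ are unchanged when $Q$ is reoriented keeping its underlying graph, so I may replace each piece $\bullet\twoheadrightarrow\bullet\twoheadleftarrow\bullet$ (coming from a double-tailed edge) and each $\bullet\hookleftarrow\bullet\hookrightarrow\bullet$ (from a double-headed edge) by $\bullet\twoheadrightarrow\bullet\hookrightarrow\bullet$, still with both new arrows in $M$; call the result $Q'$. Then the Isomorphism Theorem in its elementary form --- any linear map factors as a surjection followed by an injection, uniquely up to unique isomorphism --- sets up, at all extra vertices at once, a bijection between the representations of $Q'$ (of the various dimension vectors restricting to $\alpha$) in which every arrow of $M$ has maximal rank and the representations of dimension $\alpha$ of the quiver $Q_\Omega$ got by contracting each such path to a single arrow, where $Q_\Omega$ is the quiver determined by an orientation of $G$. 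This bijection fixes the spaces at the vertices of $G_0$, is defined over the ground field, and is compatible with direct sums (a representation of $Q'$ decomposes exactly when the contracted representation of $Q_\Omega$ does, since the space at an extra vertex is, up to isomorphism, forced to be the image of the contracted map); hence it preserves absolute indecomposability, and no maximal-rank condition survives the contraction. Consequently
\[
A_{E,\alpha}(q)=\sum_{\beta}A^{M}_{Q',\beta}(q)=A_{Q_\Omega,\alpha}(q).
\]

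Finally I would invoke Lemma~\ref{l:kaclemma} for $Q_\Omega$: its underlying graph is $G$, so the quadratic form occurring there is the $q$ of the statement, and parts (ii), (iii), (iv) for $A_{E,\alpha}(q)$ drop out of parts (ii), (iii), (iv) for $A_{Q_\Omega,\alpha}(q)$. (When an edge of $G$ is a loop, so that the corresponding piece of $Q$ is a double edge rather than a genuine path, one first uses Lemma~\ref{l:kacmax}(iv) to reorient the double edge into a path $\bullet\to\bullet\to\bullet$ before contracting; and if one does not wish to contract the ordinary arrows of $E$ one can stop at an intermediate quiver and finish with Lemma~\ref{l:kacmax} in place of Lemma~\ref{l:kaclemma}.)

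The hard part will be the honest verification, in the middle step, that passing between a linear map and its surjection--injection factorization is a bijection on isomorphism classes that is compatible with base change and with decompositions --- so that the contraction really transports absolutely indecomposable representations to absolutely indecomposable ones --- together with the bookkeeping showing that, after the use of Lemma~\ref{l:kacmax}(iv), every extra vertex can indeed be contracted and $M$ becomes empty. Everything else is formal once the identity $A_{E,\alpha}(q)=\sum_\beta A^{M}_{Q,\beta}(q)$ has been set up.
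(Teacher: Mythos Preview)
Your argument is correct, but it diverges from the paper's in one strategic choice that makes your route longer than necessary. The paper takes the associated quiver $Q$ with the \emph{maximal} number of vertices, so that every edge of $E$---including the ordinary arrows---is blown up into a length-two path. The payoff is that then both the underlying graph of $Q$ and the range of $\beta$ in the sum $A_{E,\alpha}=\sum_\beta A^M_{Q,\beta}$ are visibly independent of the equipment $\Omega$: the constraint is $\beta_k\le\min(\beta_u,\beta_v)$ at each extra vertex regardless of whether the edge was double-tailed, double-headed, or an ordinary arrow. Part~(iv) therefore follows directly from Lemma~\ref{l:kacmax}(iv), with no contraction step. Parts (i)--(iii) then come for free, since a quiver with underlying graph $G$ is itself an equipped graph, so by~(iv) one has $A_{E,\alpha}=A_{Q_\Omega,\alpha}$ and Lemma~\ref{l:kaclemma} applies.

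Your choice of the minimal-vertex quiver means the underlying graph of $Q$ still depends on $\Omega$, so Lemma~\ref{l:kacmax}(iv) alone does not yield~(iv). You compensate by reorienting via Lemma~\ref{l:kacmax}(iv) and then explicitly contracting each surjection--injection path back to a single arrow. That works, and the bijection you describe is genuine, but the whole ``hard part'' you flag is a detour the paper avoids simply by expanding the ordinary arrows too. What your approach buys is a more concrete identification $A_{E,\alpha}=A_{Q_\Omega,\alpha}$ via an explicit equivalence of representations rather than via a numerical invariance argument; what the paper's approach buys is brevity.
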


Indeed $A_{E,\alpha}$ is a sum of various $A^M_{Q,\beta}$, where $Q$ is the quiver corresponding to $E$ in Lemma~\ref{l:assocq}
(with the maximal number of vertices),
$M = I\cup S$, and $\beta$ runs through the finite number of possible dimension vectors of $Q$ which are compatible with
the linear maps in $I$ and $S$ being injective and surjective, and whose restriction to the vertices in $E$ is equal to $\alpha$.
Now (iv) follows from Lemma~\ref{l:kacmax}(iv). But then $A_{E,\alpha}(q)$ is the same as for a quiver with the same underlying graph as $E$,
and hence (i), (ii), (iii) follow from Lemma~\ref{l:kaclemma}.

Finally Kac's Theorem for quivers follows from Lemma~\ref{l:kaclemma} by certain arguments passing between algebraically
closed fields and finite fields. Exactly the same arguments can be used for equipped graphs, to deduce Theorem~\ref{t:eqkac}
from Lemma~\ref{l:eqlemma}.

\section{Auslander-Reiten Theory}

Let $Q$ be a finite quiver and let $I$ and $S$ be disjoint subsets of $Q_1$. 
By an \emph{oriented $A$-cycle}, with $A\subseteq Q_1$,  we mean a word of the form $w_0 w_1 \dots w_{n-1}$ 
with $n\ge 1$, where subscripts are taken modulo $n$,
each letter $w_i$ is either in $Q_1$ or is a formal inverse $a^{-1}$ with $a\in A$, 
consecutive letters are not inverses of each other,
and $t(a_i) = h(a_{i+1})$ for all~$i$, with the convention that $h(a^{-1})=t(a)$ and $t(a^{-1})=h(a)$.

\begin{thm}
If $Q$ has no oriented $I$- or $S$-cycles,
then $\Rep^{I,S}Q$ is functorially finite in $\Rep Q$.
\end{thm}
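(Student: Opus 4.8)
The plan is to establish contravariant and covariant finiteness separately, reducing the second to the first. Under the duality $D=\Hom_K(-,K)\colon \Rep Q\to \Rep Q^{op}$, a representation of $Q$ in which the arrows of $I$ act injectively and those of $S$ act surjectively is carried to a representation of $Q^{op}$ in which the opposite arrows of $I$ act surjectively and the opposite arrows of $S$ act injectively; thus $D$ restricts to an anti-equivalence between $\Rep^{I,S}Q$ and the corresponding subcategory of $\Rep Q^{op}$, and it interchanges left and right approximations. Reversing a walk carries an oriented $I$-cycle of $Q$ to an oriented $I^{op}$-cycle of $Q^{op}$: a forward traversal of an arrow reverses to a forward traversal of its opposite, a backward traversal of an arrow of $I$ reverses to a backward traversal of its opposite in $I^{op}$, and the non-backtracking condition is preserved. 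So the hypothesis is invariant under $Q\mapsto Q^{op}$, and it is enough to prove that $\Rep^{I,S}Q$ is contravariantly finite in $\Rep Q$.

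First I would observe that the hypothesis forces $Q$ to be acyclic, since an oriented cycle of $Q$ would be an oriented $I$-cycle using no inverses. I would then argue by induction on $|I|+|S|$, the case $I=S=\varnothing$ being trivial. The inductive step rests on the formal observation that if $\mathcal C=\mathcal C_1\cap\mathcal C_2$ with $\mathcal C_1$ and $\mathcal C_2$ contravariantly finite, and if the right $\mathcal C_1$-approximation of any object of $\mathcal C_2$ may be chosen inside $\mathcal C_2$, then $\mathcal C$ is contravariantly finite, a right $\mathcal C$-approximation of $M$ being obtained by taking a right $\mathcal C_2$-approximation $Y\to M$ followed by a right $\mathcal C_1$-approximation of $Y$. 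Assuming $I\neq\varnothing$ (the case $I=\varnothing$, $S\neq\varnothing$ being symmetric), I would choose a suitable arrow $a\in I$, set $\mathcal C_2=\Rep^{I\setminus\{a\},S}Q$ --- contravariantly finite by induction, since deleting $a$ from $I$ only removes cycles --- and let $\mathcal C_1$ be the subcategory of $\Rep Q$ in which $a$ acts injectively, so that $\mathcal C_1\cap\mathcal C_2=\Rep^{I,S}Q$. Here the no-cycle hypotheses are used to pick $a$ so that $\mathcal C_1$ is contravariantly finite and so that its right approximations respect the injectivity and surjectivity conditions on the remaining arrows.

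Everything then comes down to the single-arrow case: for $a\colon i\to j$ with no oriented $\{a\}$-cycle --- equivalently, since $Q$ is acyclic, $a$ being the unique directed path from $i$ to $j$ --- construct the right approximation, in the subcategory where $a$ acts injectively, of an arbitrary $M$. This requires care. Naive candidates fail: enlarging $M_j$ by a copy of $\Ker M_a$ (with $M_a$ replaced by its graph) is not a right approximation even for small $M$; for instance, when $M$ is the simple representation at $i$ the right approximation is forced to be strictly larger and is built from an injective-type representation rather than from any modification of $M$. The correct construction attaches to $M$ auxiliary data along the ``fibre'' of the arrow $a$, arranged so that the hypothesis that the test object has $a$ acting injectively can be exploited in the lifting argument; that $a$ is the only path $i\to j$ is exactly what makes this construction well defined and finite-dimensional, and the dual construction treats an arrow of $S$. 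I expect this explicit construction, together with the verification that it preserves the rank conditions on the other arrows (which is what makes the inductive step go through), to be the main obstacle; the rest of the argument is formal.
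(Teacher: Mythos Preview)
Your outline is a reasonable strategy, and the duality reduction, the observation that $Q$ must be acyclic, and the formal inductive framework are all sound. But you have yourself located the gap: the single-arrow right approximation is never actually constructed, and the claim that one can choose $a\in I$ so that this approximation preserves the remaining injectivity and surjectivity conditions is asserted without argument. As it stands this is a plan, not a proof, and the part you defer is exactly the substance of the theorem.

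The paper closes precisely this gap, by a device you are missing. For each arrow $a$ it introduces $C(a)=KQe_{t(a)}/KQa$, which satisfies $\Hom(C(a),X)=0$ if and only if $X_a$ is injective and $\Ext^1(C(a),X)=0$ if and only if $X_a$ is surjective. The no-$I$-cycle hypothesis is then used, not to isolate one arrow at a time, but to order all of $I=\{a_1,\dots,a_s\}$ so that $\Ext^1(C(a_i),C(a_j))=0$ for $i\le j$, and similarly for $S$. With this in hand the \emph{left} approximation of $X$ is built iteratively and explicitly: one first passes to successive cokernels of the universal maps $C(a_j)^N\to X^{j-1}$, making the arrows of $I$ injective one by one (the ordering guaranteeing that earlier injectivity is not destroyed), and then takes successive universal extensions by the $C(b_j)$ to make the arrows of $S$ surjective (disjointness of $I$ and $S$, together with the ordering on $S$, controls the needed preservation here). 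Contravariant finiteness then follows by duality, as you also propose. If you push your inductive scheme through you will find that the ``auxiliary data along the fibre of $a$'' you anticipate is exactly the dual of $C(a)$, and that both the choice of which arrow to process next and the preservation verification reduce to the same $\Hom$/$\Ext$ computations among the $C(a)$; so your route would converge to the paper's argument rather than avoid it.
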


\begin{proof}
We show it is covariantly finite, that is, for every $KQ$-module $X$ there is a morphism $f:X\to Y$ with $Y$ in
$\Rep^{I,S}Q$, and such that any morphism from $X$ to an object in $\Rep^{I,S}Q$ factors through $f$.
By duality one gets contravariant finiteness, so  $\Rep^{I,S}Q$ is functorially finite in $\Rep Q$.

Given an arrow $a$, the $K Q$-module $C(a) = K Q e_{t(a)} / K Q a$ fits in an exact sequence
$
0 \to KQ e_{h(a)}\to KQ e_{t(a)} \to C(a) \to 0
$
where the first map is right multiplication by $a$.
Applying $\Hom(-,X)$ to this sequence gives an exact sequence
\[
0 \to \Hom(C(a),X) \to e_{t(a)} X \xrightarrow{X_a} e_{h(a)} X \to \Ext^1(C(a),X) \to 0,
\]
so $X_a$ is injective if and only if $\Hom(C(a),X)=0$ and $X_a$ is surjective if and only if $\Ext^1(C(a),X)=0$.

If $a\neq b$ then $\Hom(C(a),C(b))=0$, for if $\theta$ is such a morphism, then 
$\theta (KQ a + e_{t(a)}) = K Q b + z$ for some $z \in K Q e_{t(b)}$, and we may assume that $z \in e_{t(a)} K Q e_{t(b)}$.
Now $a$ times the element $KQ a + e_{t(a)}$ is zero, so $a$ times the element $K Q b + z$, must be zero, so $a z\in K Q b$.
But since $a\neq b$ this implies that $z \in K Q b$. (Similarly $\End(C(a)) = K$.)

Since there are no oriented $I$-cycles, we can write $I = \{ a_1,a_2,\dots,a_s\}$ 
such that if $i\le j$ then any path from $t(a_j)$ to $h(a_i)$ is of the form $q a_j$ or $a_i q$ for some path $q$.
We show that $\Ext^1(C(a_i),C(a_j))=0$ for $i\le j$.
This is the condition that the arrow $a_i$ in $C(a_j)$ is surjective.
Thus that 
\[
e_{h(a_i)} ( K Q e_{t(a_j)} / K Q a_j ) = a_i ( K Q e_{t(a_j)} / K Q a_j ).
\]
%Rewrite this as 
%\[
%e_{h(a_i)} K Q e_{t(a_j)} = e_{h(a_i)} K Q a_j + a_i K Q e_{t(a_j)}.
%\]
This holds by the condition on paths from $t(a_j)$ to $h(a_i)$.
Similarly, by the condition on $S$-cycles, we can write $S = \{ b_1,\dots,b_t\}$ with $\Ext^1(C(b_i),C(b_j))=0$ for $i\le j$.

Given a representation $X$ of $Q$, we construct morphisms
\[
X = X^0 \to X^1 \to \dots \to X^s
\]
such that in the representation $X^j$ the maps $a_1,\dots,a_j$ are injective, and such that any map from $X$ to a representation
in $\Rep^{I,S}Q$ factors through $X^j$.
Suppose we have constructed $X^{j-1}$. We take $X^{j-1}\to X^j$ to be the cokernel of the universal map
$\phi: C(a_j)^N\to X^{j-1}$ where $N = \dim\Hom(C(a_j),X^{j-1})$.
Any map $X^{j-1}\to Y$ with $Y$ in $\Rep^{I,S}Q$ factors through $X^j$ since $\Hom(C(a_j),Y)=0$.
For $i\le j$ we have exact sequences
\[
\begin{split}
\Hom(C(a_i),\Ima \phi) &\to \Hom(C(a_i),X^{j-1}) \to \Hom(C(a_i),X^j) \to \\
& \to \Ext^1(C(a_i),\Ima\phi)
\end{split}
\]
and
\[
\begin{split}
\Hom(C(a_i),C(a_j)^N) &\to \Hom(C(a_i),\Ima\phi) \to \Ext^1(C(a_i),\Ker\phi) \to \\
&\to \Ext^1(C(a_i),C(a_j)^N)\to \Ext^1(C(a_i),\Ima\phi)\to 0
\end{split}
\]
Now $\Ext^1(C(a_i),C(a_j)) = 0$, so $\Ext^1(C(a_i),\Ima\phi)=0$.
If $i\le j-1$ then $\Hom(C(a_i),X^{j-1})=0$., so $\Hom(C(a_i),X^j)=0$, so $X^j$ has $a_i$ injective.
By the universal property of $\phi$, the map
\[
\Hom(C(a_j),C(a_j)^N) \to  \Hom(C(a_j),X^{j-1})
\]
is onto, hence so is $\Hom(C(a_j),\Ima\phi) \to \Hom(C(a_j),X^{j-1})$. 
It follows that $\Hom(C(a_j),X^j)=0$, so $X^j$ has $a_j$ injective.

We now construct morphisms
\[
X \to X^s = Z^0 \to Z^1 \to \dots \to Z^t
\]
such that in the representation $Z^j$ the maps in $I$ are injective and $b_1,\dots,b_j$ are surjective, and such that any map from $X$ to a representation
in $\Rep^{I,S}Q$ factors through $Z^j$.
Suppose we have constructed $Z^{j-1}$. We take $Z^{j-1}\to Z^j$ to be the universal extension
\[
0\to Z^{j-1} \to Z^j \to C(b_j)^N\to 0
\]
where $N = \dim \Ext^1(C(b_j),Z^{j-1})$. 
Any map $g:Z^{j-1}\to Y$ with $Y$ in $\Rep^{I,S}Q$ factors through $Z^j$ since $\Ext^1(C(b_j),Y)=0$,
so the pushout of this exact sequence along $g$ splits.
For any $i$ we have an exact sequence
\[
0 \to \Hom(C(a_i),Z^{j-1}) \to \Hom(C(a_i),Z^j) \to \Hom(C(a_i),C(b_j)^N)
\]
and since $I$ and $S$ are disjoint we have $\Hom(C(a_i),C(b_j))=0$, so $a_i$ is injective in $Z^j$.
For $i\le j-1$ we have exact sequences
\[
\Ext^1(C(b_i),Z^{j-1}) \to \Ext^1(C(b_i),Z^j) \to \Ext^1(C(b_i),C(b_j)^N).
\]
Since $\Ext^1(C(b_i),C(b_j))=0$ and $\Ext^1(C(b_i),Z^{j-1})=0$, we deduce that $ \Ext^1(C(b_i),Z^j)=0$,
so $b_i$ is a surjection in $Z^j$.
We have a long exact sequence
\[
\begin{split}
%0 &\to \Hom(C(b_j),Z^{j-1}) \to \Hom(C(b_j),Z^j) \to 
\Hom(C(b_j),C(b_j)^N) &\xrightarrow{c} \Ext^1(C(b_j),Z^{j-1}) \to \Ext^1(C(b_j),Z^j) \to \\
&\to \Ext^1(C(b_j),C(b_j)^N).
\end{split}
\]
By the universal property the connecting map $c$ is surjective, 
and also $\Ext^1(C(b_j),C(b_j))=0$. 
Thus $\Ext^1(C(b_j),Z^j)=0$, so $b_j$ is a surjection in the representation $Z^j$.
Now $X\to Z^t$ is the wanted morphism.
\end{proof}

By results of Auslander and Smal{\o} \cite{AS}, 
under the hypotheses of the theorem, the category $\Rep^{I,S} Q$ has Auslander-Reiten sequences.

To apply this to an equipped graph $E$ we use Lemma~\ref{l:assocq}.
There is a natural exact structure on $\Rep E$, with 
$0\to X\to Y\to Z\to 0$
an exact sequence if and only if for each vertex $i\in G_0$ the sequence on vector spaces
$0\to X_i\to Y_i\to Z_i\to 0$ is exact, and for each arrow $a\in G_1$ the sequence on relations
$0\to X_a \to Y_a \to Z_a\to 0$ is exact.
This corresponds to the embedding of $\Rep E$ in $\Rep Q$ where $Q$ is the associated quiver 
with the minimal number of vertices.
We immediately obtain the following.

\begin{cor}
Let $E$ be an equipped graph, 
and suppose that there are no oriented cycles in $G$ on which the characteristic function
$\phi$ is constant.
Then $\Rep E$ has Auslander-Reiten sequences.
\end{cor}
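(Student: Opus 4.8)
The plan is to deduce the Corollary from the Theorem by way of Lemma~\ref{l:assocq}. Let $Q$ be the associated quiver with the minimal number of vertices, equipped with subsets $I,S\subseteq Q_1$ as in that lemma. Recall that each added vertex has valency two: the two arrows at an added vertex arising from a double-headed edge $\bullet\leftarrow\!\rightarrow\bullet$ both point away from it and lie in $I$, the two arrows at an added vertex arising from a double-tailed edge $\bullet-\!\!\!-\!\!\!-\bullet$ both point towards it and lie in $S$, and the arrows of $Q$ arising from ordinary edges lie in neither set; in particular $I\cap S=\varnothing$. The equivalence $\Rep E\simeq\Rep^{I,S}Q$ of Lemma~\ref{l:assocq} respects the natural exact structures, as noted above for the minimal $Q$. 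Hence it is enough to show that the hypothesis of the Corollary forces $Q$ to have no oriented $I$-cycle and no oriented $S$-cycle: the Theorem then makes $\Rep^{I,S}Q$ functorially finite in $\Rep Q$, so (as already observed, using \cite{AS}) $\Rep^{I,S}Q$ has Auslander--Reiten sequences, and these transport along the equivalence to $\Rep E$.

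I would argue by contraposition, turning an oriented $I$- or $S$-cycle in $Q$ into a $\phi$-constant oriented cycle in $G$. Consider first an oriented $S$-cycle. An added vertex of double-headed type cannot occur in it: no arrow of $Q$ points to such a vertex, and the arrows at it lie in $I$, not $S$, so cannot be formally inverted, hence the vertex can never be entered. An added vertex $k$ of double-tailed type, on the other hand, must be passed straight through --- one enters along one of its two arrows and leaves along the formal inverse of the other, legitimate because both arrows lie in $S$, and the no-backtracking rule forbids leaving by the inverse of the arrow just traversed --- and such a passage amounts to crossing the corresponding double-tailed edge of $G$, on which $\phi\equiv1$. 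Contracting all the added vertices therefore converts the oriented $S$-cycle into a reduced closed walk in $G$ all of whose steps have $\phi=1$, the steps coming from ordinary edges being traversed in their forward ($\phi=1$) direction, since ordinary arrows of $Q$ lie in neither $I$ nor $S$ and so are never formally inverted. Reducedness of the walk survives contraction, because a backtrack in $G$ would force two consecutive mutually inverse letters in $Q$. This reduced closed walk is a $\phi$-constant oriented cycle in $G$, contrary to hypothesis.

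The case of an oriented $I$-cycle is analogous, with the two types of added vertex interchanging roles: double-tailed-type vertices do not occur, and passing through a double-headed-type vertex amounts to crossing a double-headed edge of $G$, on which $\phi\equiv0$. Contracting the added vertices now produces a reduced closed walk in $G$ whose steps coming from double-headed edges have $\phi=0$ but whose steps coming from ordinary edges are traversed forward and hence have $\phi=1$ --- here the ``kept'' ordinary arrows spoil the symmetry between $I$ and $S$. The remedy is to reverse the whole closed walk, replacing each step by the opposite edge of $G$: a forward ordinary-edge step $a$, with $a\in\Omega$, becomes $a^{*}\notin\Omega$, a step with $\phi=0$, and a double-headed-edge step becomes another step of the same kind, again with $\phi=0$; moreover the reverse of a reduced closed walk is reduced. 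Thus we again obtain a $\phi$-constant (here $\phi\equiv0$) oriented cycle in $G$, a contradiction. With both halves of the hypothesis of the Theorem verified, the Corollary follows.

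I expect the only substantive work to be this combinatorial translation of the hypothesis --- in particular, the handling of edge-loops, and the verification that the contracted walk (and, in the $I$-cycle case, its reversal) really is reduced and non-empty; everything else is a direct appeal to the Theorem, to the results of \cite{AS} quoted above, and to Lemma~\ref{l:assocq}.
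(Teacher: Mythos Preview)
Your proposal is correct and follows exactly the route the paper indicates: apply Lemma~\ref{l:assocq} with the minimal associated quiver, verify that the hypothesis on $\phi$-constant cycles in $G$ translates into the absence of oriented $I$- and $S$-cycles in $Q$, and then invoke the Theorem together with Auslander--Smal\o. The paper itself offers no details beyond ``We immediately obtain the following'', so your combinatorial translation (including the reversal trick in the $I$-cycle case) simply spells out what the paper leaves implicit.
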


\section{Examples of Auslander-Reiten quivers}

Let $E = ((G,*),\Omega)$ be an equipped graph. 
We restrict to the case $\Omega = \varnothing$, so that all arrows are two-headed,
and in order to ensure that we have Auslander-Reiten sequences we assume that the underlying graph is a tree.
In this case the associated quiver $Q$ in Lemma~\ref{l:assocq} is uniquely determined, $I = Q_1$, $S = \varnothing$,
and we identify $\Rep E$ with $\Rep^{I,S} Q$.

The projective modules in $\Rep Q$ are in $\Rep E$, and they are exactly the relative projectives.
The corresponding sink maps (i.e. minimal right almost split maps) are the same as for $Q$, so of the form
\[
\bigoplus_{t(a)=i} KQ e_{h(a)} \to KQ e_i.
\]
One uses knitting to compute the rest of the preprojective component. We write $\tau$ and $\tau^-$
for the Auslander-Reiten translations for $\Rep E$ (see the discussion before \cite[\S 2.3 Lemma 2]{R}),
and draw a dotted line to show the action of $\tau$.
Iteratively, one only draws a vertex for a representation
$X$ once one knows its dimension vector and can draw arrows for all irreducible maps ending at $X$.
Once one has drawn a representation $X$ and all irreducible map starting at $X$,
then by \cite[\S 2.2 Lemma 3]{R} one knows the dimension of the source map starting at $X$,
and hence by \cite[\S 2.3 Lemma 2]{R} one can draw $\tau^- X$.
As an example, if $E$ is the equipped graph $A_4$
\[
\stackrel{1}{\bullet} \ \leftarrow\!\rightarrow \ 
\stackrel{2}{\bullet} \ \leftarrow\!\rightarrow \ 
\stackrel{3}{\bullet} \ \leftarrow\!\rightarrow \ 
\stackrel{4}{\bullet}
\]
then the associated quiver $Q$ is
\[
\stackrel{1}{\bullet} \hookleftarrow \stackrel{1'}{\bullet} \hookrightarrow
\stackrel{2}{\bullet} \hookleftarrow \stackrel{2'}{\bullet} \hookrightarrow
\stackrel{3}{\bullet} \hookleftarrow \stackrel{3'}{\bullet} \hookrightarrow
\stackrel{4}{\bullet}
\]
and the Auslander-Reiten quiver is as in Figure~\ref{f:Afour}. In this case the representations of $Q$ are known;
we write $ij$ for the indecomposable representation which is 1-dimensional at vertices $i$ and $j$ and all vertices
between them, and zero at the remaining vertices.
Note that in the process of knitting one would expect an Auslander-Reiten sequence with left hand term $11$ and middle term $12$,
but then the right hand term would be $1'2$, which is not in the category $\Rep^{I,S}Q$, which shows that $11$ is a relative injective.

\begin{figure}
\[
{\xymatrix@=1.5pc{
11 \ar[dr]  \\
& 12 	\ar[dr] \\
22 \ar[ur]\ar[dr] & & 13 \ar[dr] \ar@{.}[ll] \\
& 23 \ar[ur]\ar[dr] & & 14  \ar@{.}[ll] \\
33 \ar[ur]\ar[dr] & & 24 \ar[ur]  \ar@{.}[ll] \\
& 34 \ar[ur] \\
44  \ar[ur]
}}
\]
\caption{Auslander-Reiten quiver for $A_4$}
%$\stackrel{1}{\bullet} \leftarrow\!\rightarrow \stackrel{2}{\bullet} \leftarrow\!\rightarrow \stackrel{3}{\bullet} \leftarrow\!\rightarrow \stackrel{4}{\bullet}$}
\label{f:Afour}
\end{figure}
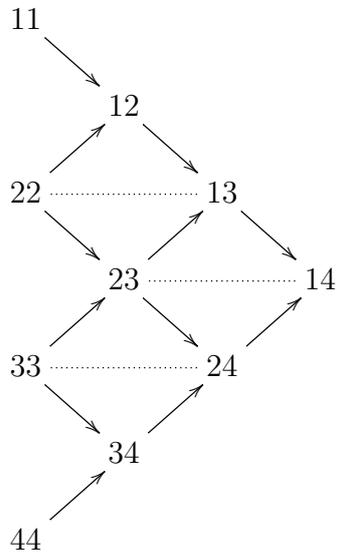

\begin{figure}
\[
\includegraphics[width=\textwidth]{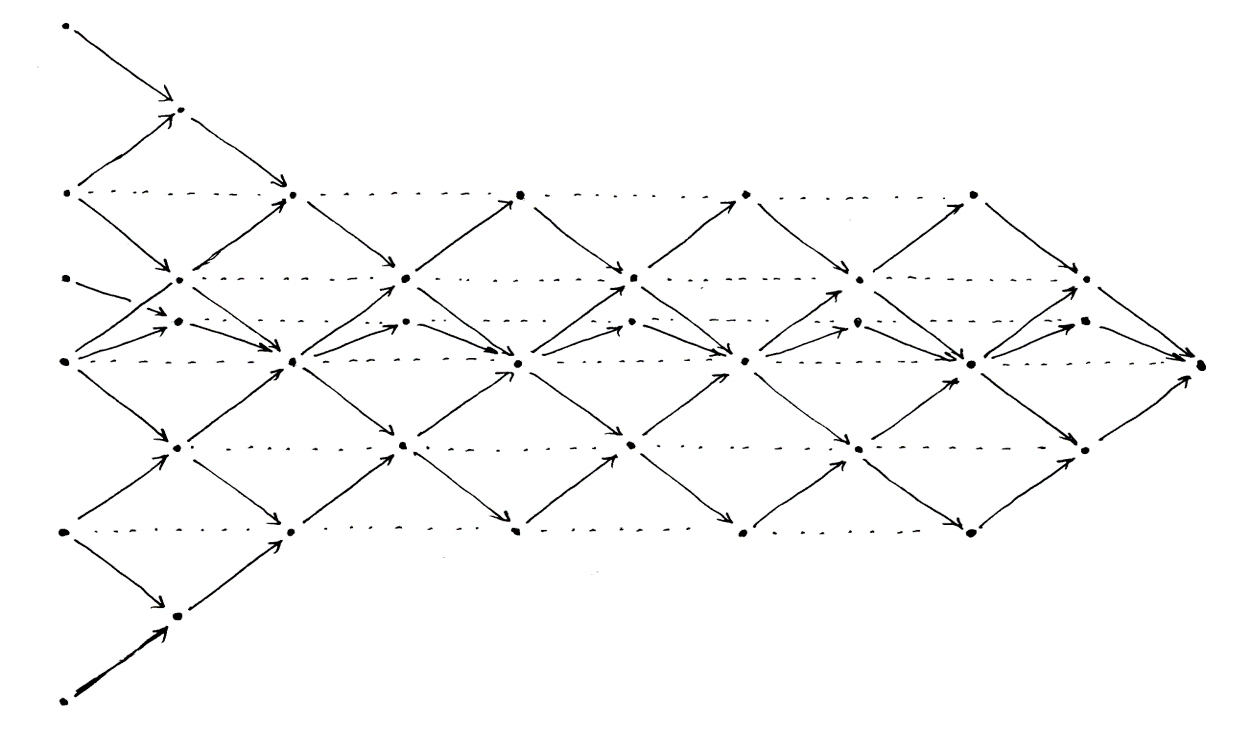}
\]
\vspace{-1cm}
\caption{Auslander-Reiten quiver for $E_6$}
\label{f:Esix}
\end{figure}

In Figure~\ref{f:Esix} we show the Auslander-Reiten quiver of the equipped graph~$E_6$
\[
{\xymatrix@=1.5pc{
& & \bullet \ar@{<->}[d]  \\
\bullet \ar@{<->}[r] & \bullet \ar@{<->}[r]  & \bullet \ar@{<->}[r]  & \bullet \ar@{<->}[r]  & \bullet
}}
\]
Observe that in each case the sources correspond to the vertices of the equipped graph, and there is a unique sink,
corresponding to the representation of the quiver which is 1-dimensional at each vertex.
The Auslander-Reiten quiver thus has a `rocket-shaped' appearance.

We now turn to the case of an extended Dynkin equipped graph~$E$.
We construct the preprojective component by knitting.
For the star-shaped graphs, $\tilde D_4$, $\tilde E_6$, $\tilde E_7$ and $\tilde E_8$,
the preprojective component contains all of the indecomposable representations of $E$ which are supported on one of
the arms of the graph. Thus the remaining indecomposable representations are all supported at the central vertex of the star.
Thus, considered as representations of the associated quiver $Q$, the outward pointing arrows are all surjective, and the inward pointing
arrow are all injective. Since we are only interested in representations in which all arrows are injective, it follows that the outward
pointing arrows must be isomorphisms. Thus, shrinking these arrows to a point, we are interested in representations of quiver $Q'$
with the same underlying graph as $E$, but oriented so that all arrows are pointing inwards. 
Now we only want the representations of $Q'$ in which these arrows are injective, which involves omitting a finite number of
preinjective representations of $Q'$. Thus for example, in Figure~\ref{f:Eeighttilde} we show the Auslander-Reiten quiver for $\tilde E_8$
\[
{\xymatrix@=1.5pc{
& & \bullet \ar@{<->}[d]  \\
\bullet \ar@{<->}[r] & \bullet \ar@{<->}[r]  & \bullet \ar@{<->}[r]  & \bullet \ar@{<->}[r] 
& \bullet \ar@{<->}[r] & \bullet \ar@{<->}[r]  & \bullet \ar@{<->}[r]  & \bullet
}}
\]
\begin{figure}
\[
\includegraphics[width=\textwidth]{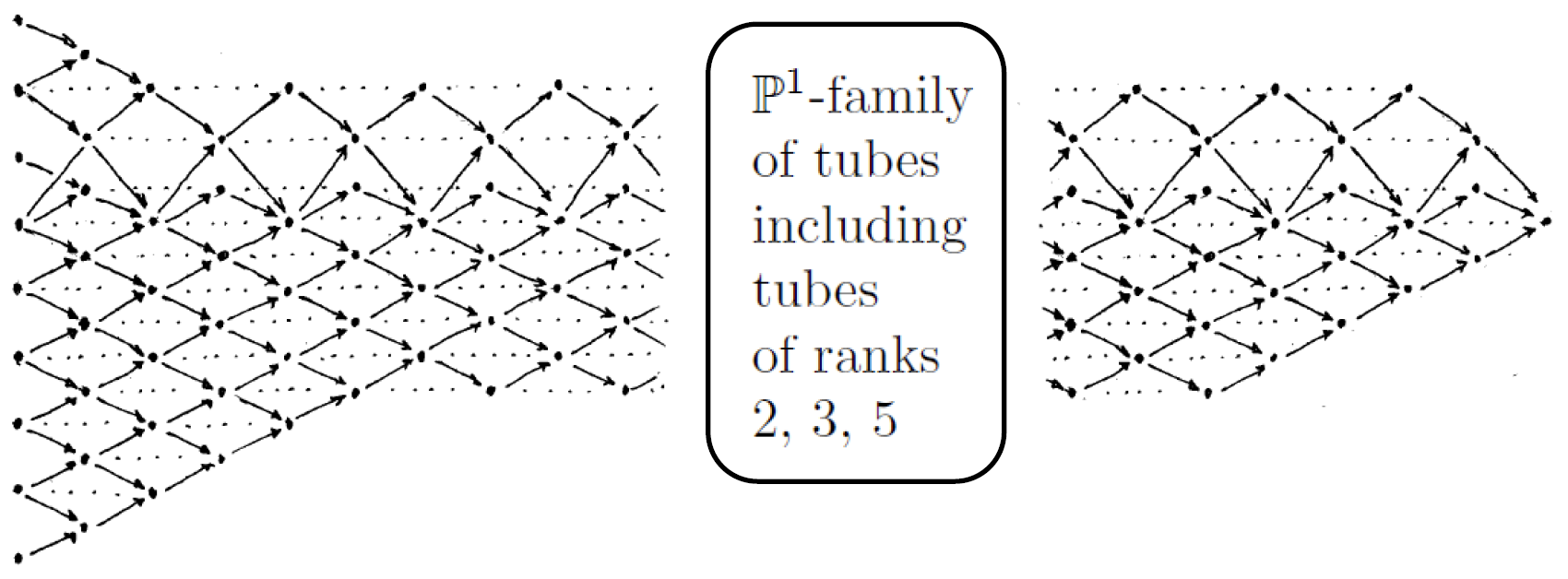}
\]
\vspace{-1cm}
\caption{Auslander-Reiten quiver for $\tilde E_8$}
\label{f:Eeighttilde}
\end{figure}

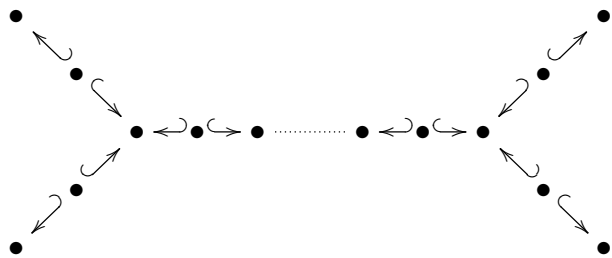
\begin{figure}
\[
{\xymatrix@=0.8pc{
\bullet & & & & & & & &  &  & \bullet  \\
& \bullet \ar@{_{(}->}[lu] \ar@{^{(}->}[rd] & & & & & & & & \bullet  \ar@{^{(}->}[ru] \ar@{_{(}->}[ld] \\
& & \bullet 
& \bullet \ar@{_{(}->}[l] \ar@{^{(}->}[r] & \bullet \ar@{.}[rr] & & \bullet 
& \bullet \ar@{_{(}->}[l] \ar@{^{(}->}[r] & 
\bullet  \\
& \bullet \ar@{_{(}->}[ld]   \ar@{^{(}->}[ru] & & & & & & & & \bullet  \ar@{^{(}->}[rd]  \ar@{_{(}->}[lu]\\
\bullet & & & & & & & & & & \bullet 
}}
\]
\vspace{-0.3cm}
\caption{The associated quiver for $E$ of type $\tilde D_n$}
\label{f:assocDntilde}
\end{figure}
\begin{figure}
\[
\includegraphics[width=\textwidth]{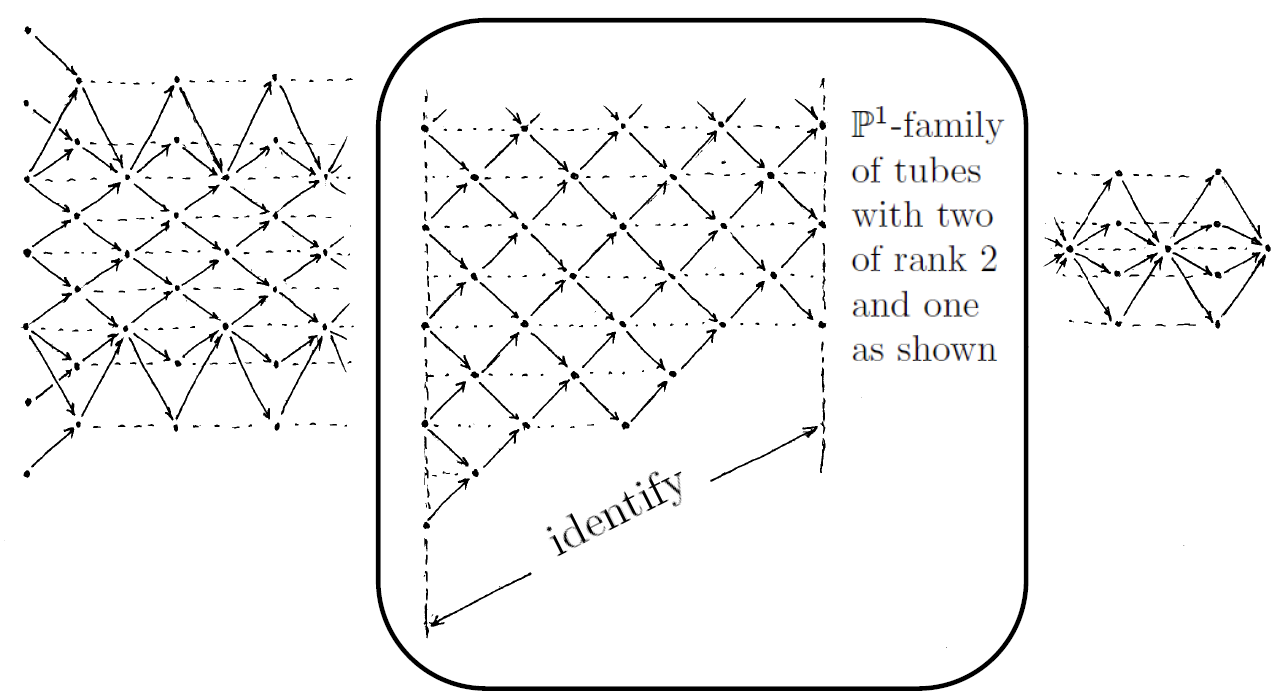}
\]
%\vspace{-1cm}
\caption{Auslander-Reiten quiver for $\tilde D_6$}
\label{f:Dsixtilde}
\end{figure}
For type $\tilde D_n$ with $n>4$, the associated quiver $Q$ has a central trunk and four arms of length 2, as in Figure~\ref{f:assocDntilde}.
The preprojective component contains all indecomposables supported on an arm, so the remaining indecomposable representations
of $Q$ are nonzero at at least one vertex on the trunk. It follows that the outward maps on the arms are surjective, and hence isomorphisms.
Thus these arrows can be shrunk to a point, giving a quiver $Q'$ of shape $\tilde D_{2n-4}$.
Using the classification of representations of $Q'$ one obtains the Auslander-Reiten quiver of $E$. 
We illustrate this in Figure~\ref{f:Dsixtilde} for the quiver $\tilde D_6$.

\begin{figure}
\[
\includegraphics[width=0.8\textwidth]{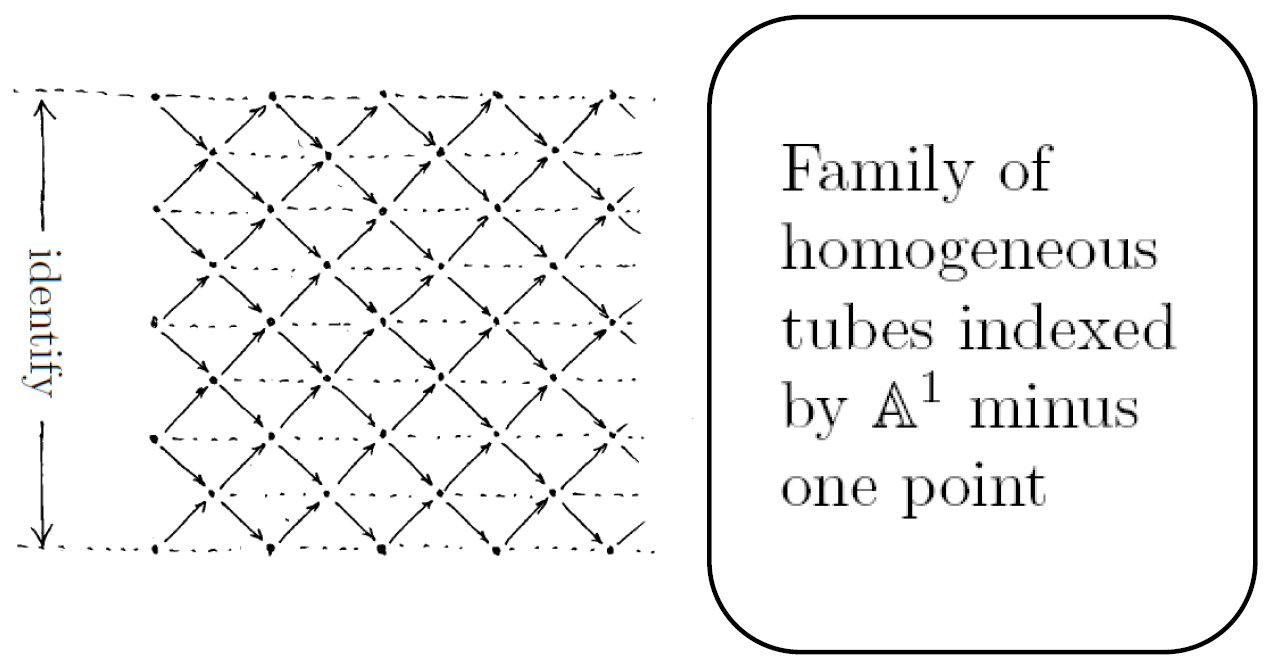}
\]
\vspace{-0.5cm}
\caption{Auslander-Reiten quiver for $\tilde A_3$}
\label{f:Athreetilde}
\end{figure}
Finally, although the extended Dynkin equipped graph $\tilde A_n$ does not meet our hypotheses, and is not functorially 
finite, 
it does still have an Auslander-Reiten quiver. 
In Figure~\ref{f:Athreetilde} this is shown for $\tilde A_3$ 
\[
{\xymatrix@=1.5pc{
\bullet \ar@{<->}[d] \ar@{<->}[r] & \bullet \ar@{<->}[d] \\
\bullet \ar@{<->}[r] & \bullet 
}}
\]
It is curious that it looks like the Auslander-Reiten quiver for $\tilde A_3$ as an oriented cycle quiver, but with one of the tubes on its side.

\frenchspacing

\end{document}